\newtheorem{theorem}{Theorem}[section]
\newtheorem{definition}[theorem]{Definition}
\newtheorem{proposition}[theorem]{Proposition}
\newtheorem{corollary}[theorem]{Corollary}
\newtheorem{lemma}[theorem]{Lemma}
\newtheorem{example}[theorem]{Example}
\newcommand{\bdfn}{\begin{definition}}
\newcommand{\edfn}{\end{definition}}
\newcommand{\bthm}{\begin{theorem}}
\newcommand{\ethm}{\end{theorem}}
\newcommand{\bprop}{\begin{proposition}}
\newcommand{\eprop}{\end{proposition}}
\newcommand{\bcor}{\begin{corollary}}
\newcommand{\ecor}{\end{corollary}}
\newcommand{\blem}{\begin{lemma}}
\newcommand{\elem}{\end{lemma}}
\newcommand{\bex}{\begin{example}\begin{rm}}
\newcommand{\eex}{\end{rm}\end{example}}
\newcommand{\be}{\begin{enumerate}}
\newcommand{\ee}{\end{enumerate}}
\newcommand{\bce}{\begin{center}}
\newcommand{\ece}{\end{center}}
\newcommand{\ba}{\begin{array}} 
\newcommand{\ea}{\end{array}}
\newcommand{\beq}{\begin{equation}}
\newcommand{\eeq}{\end{equation}}
\newcommand {\bua} {\begin{eqnarray*}}
\newcommand {\eua} {\end {eqnarray*}}
\newcommand {\bea} {\begin{eqnarray}}
\newcommand {\eea} {\end {eqnarray}}
\newcommand{\bt}{\begin{tabular}}
\newcommand{\et}{\end{tabular}}
\newcommand{\bi}{\begin{itemize}}
\newcommand{\ei}{\end{itemize}}
\newcommand{\Ra}{\Rightarrow}
\newcommand{\lan}{\left\langle}
\newcommand{\ran}{\right\rangle}
\newcommand{\eps}{\varepsilon}
\newcommand{\vp}{\varphi}
\newcommand{\ds}{\displaystyle}
\newcommand{\N}{{\mathbb N}}
\newcommand{\se}{\subseteq}
\newcommand{\limn}{\ds\lim_{n\to\infty}}
\newcommand{\lsupn}{\ds \limsup_{n\to\infty}}
\newcommand{\rateczeroq}{\sigma_{0}}
\newcommand{\rateczeroqs}{\sigma^*_{0}}
\newcommand{\rateconeq}{\sigma_{1}}
\newcommand{\ratectwoq}{\sigma_{2}}
\newcommand{\ratecthreeq}{\sigma_{3}}
\newcommand{\ratecfourq}{\sigma_{4}}
\newcommand{\ratecfiveq}{\sigma_{5}}
\newcommand{\ratecsixq}{\sigma_{6}}
\newcommand{\lsn}{\delta}
\newcommand{\exasreg}{\overline{\Sigma}}
\newcommand{\exLmeta}{\overline{\Delta}_L}
\title{Quantitative results on a Halpern-type proximal point algorithm\thanks{This is a preprint of an article published in Computational Optimization and Applications. The final authenticated version is
available online at: \url{https://doi.org/10.1007/s10589-021-00263-w}.}
}
\author{Lauren\c{t}iu Leu\c{s}tean${}^{a,b}$  and Pedro Pinto${}^{c}$\\[2mm]
\footnotesize ${}^a$ Research Center for Logic, Optimization and Security (LOS), Department of Computer Science, \\
\footnotesize Faculty of Mathematics and Computer Science, University of Bucharest.\\
\footnotesize  Academiei 14,  010014 Bucharest, Romania\\[1mm]
\footnotesize ${}^b$ Simion Stoilow Institute of Mathematics of the Romanian Academy,\\
\footnotesize Calea Grivi\c tei 21, 010702 Bucharest, Romania \\[1mm]
\footnotesize ${}^c$ Department of Mathematics, Technische Universit\" at Darmstadt,\\
\footnotesize Schlossgartenstra\ss{}e 7, 64289 Darmstadt, Germany\\[2mm]
\footnotesize E-mails:  laurentiu.leustean@unibuc.ro, pinto@mathematik.tu-darmstadt.de
}
\date{}
\begin{document}

\maketitle

\begin{abstract}
We apply proof mining methods to analyse a  result of Boikanyo and Moro\c{s}anu on the strong convergence of a Halpern-type proximal point algorithm. As a consequence, we obtain quantitative versions of this result, providing uniform effective rates of asymptotic regularity and metastability. \\

\noindent {\em Keywords:} Proximal point algorithm; Maximally monotone operators; Halpern iteration; Rates of convergence; Rates of metastability; Proof mining. \\

\noindent  {\it Mathematics Subject Classification 2010}:  47H05, 47H09, 47J25, 03F10.

\end{abstract}

\section{Introduction}

Let $H$ be a real Hilbert space  and $A:H\to 2^{H}$ be a maximally monotone operator such 
that the set $zer(A)$ of zeros of $A$ is nonempty. For every $\gamma>0$, the resolvent $J_{\gamma A}$ of 
$\gamma A$  is defined by $J_{\gamma A}= (id_H + \gamma A)^{-1}$. It is well-known (see, e.g., \cite{BauCom17}) 
that $J_{\gamma A}:H\to H$ is a single-valued firmly nonexpansive (hence, nonexpansive) mapping and that 
$Fix(J_{\gamma A})=zer(A)$  for every $\gamma>0$. Furthermore, $zer(A)$ is a closed convex subset of $H$ and
$P_{zer(A)}$ denotes the projection onto $zer(A)$.

A major problem in convex optimization is finding zeros of maximally monotone operators. A classical method
for solving this problem is the  proximal point algorithm, defined by 
Rockafellar \cite{Roc76} as follows: 
\begin{equation}\label{PPA}
PPA \qquad x_0\in H, \quad x_{n+1}:=J_{\beta_n A}x_n+e_n,
\end{equation}
where $(\beta_n)_{n\in\N}$ is  a sequence of positive real numbers and $(e_n)_{n\in\N}\se H$ 
is a sequence of errors.  Special cases of \eqref{PPA} 
have been previously studied by Martinet \cite{Mar70}. 
Rockafellar proved, under the assumptions that  $(\beta_n)$ is bounded away from zero and $(\|e_n\|)$ is a summable sequence, that 
$(x_n)$ is weakly convergent to a zero of $A$ and he posed the question whether the weak convergence 
can be improved, in general, to strong convergence. This question was answered in the negative by G{\" u}ler \cite{Gul91}.

This being the case, the following problem is very natural:  
\bce 
modify PPA such that strong convergence is guaranteed.
\ece
This problem has attracted a lot of research, many new algorithms based on PPA were introduced and 
proved to be strongly convergent (just to give a few examples, see \cite{EckBer92,SolSva00,Xu02,WanWanXu16,BotCseMei19}). 
Since the set of zeros of  a maximally monotone operator coincides with the fixed point set of the resolvent, 
one idea to obtain new algorithms is to combine PPA with nonlinear iterations studied in metric fixed point theory. 
One such iteration is the well-known Halpern iteration, defined, for any nonexpansive mapping $T:H\to H$ and any sequence $(\alpha_n)_{n\in\N}$ in $[0,1]$, by 
\beq\label{def-Halpern}
x_0,u\in H, \quad x_{n+1}:=\alpha_nu+(1-\alpha_n)Tx_n.
\eeq
The iteration was introduced by Halpern \cite{Hal67} for the special case $u=0$. A classical 
result is Wittmann's theorem \cite{Wit92}, which proves the strong convergence of $(x_n)$ towards a fixed point of $T$, under some assumptions on  $(\alpha_n)$ that are satisfied for the natural choice 
$\alpha_n=\frac1{n+1}$.  Since, for $T$ linear  and $\alpha_n=\frac1{n+1}$, the Halpern 
iteration  becomes the ergodic average, Wittmann's result  is a nonlinear generalization of 
the von Neumann mean ergodic theorem.

By combining PPA with the Halpern iteration we obtain the so-called {\em Halpern-type proximal point algorithms}. 
One such algorithm was introduced independently by Kamimura and 
Takahashi \cite{KamTak00} and Xu \cite{Xu02}:   
\begin{equation}\label{HPPA-def-1}
HPPA \qquad  x_0,u\in H\, \quad x_{n+1}:=\alpha_n u+(1-\alpha_n)J_{\beta_n A}x_n+e_n,
\end{equation}
where $(\alpha_n)$ is a sequence in $(0,1]$, $(\beta_n)$ is a sequence of positive real numbers 
and $(e_n)\se H$ is the error sequence. Strong convergence results for HPPA  and its generalizations to classes of Banach or geodesic spaces were obtained by imposing different conditions on the sequences $(\alpha_n)$, $(\beta_n)$, ($e_n)$.

We consider in the sequel these  conditions: 
$$\ba{llll}
\!\!\!\!\!\!\!(C0) \,  \limn \alpha_n=0, &  \,\, (C1)  \, \sum\limits_{n=0}^\infty \alpha_n=\infty, 
& \,\, (C2) \,  \prod\limits_{n=0}^\infty (1-\alpha_n)=0, \\
\!\!\!\!\!\!(C3) \,  \limn \frac{|\alpha_{n+1}-\alpha_n|}{\alpha_n^2} =0, &
\,\, (C4)\,  \limn\beta_n=\beta >0, &  
\,\, (C5) \, \sum\limits\limits_{n=0}^\infty \|e_n\| <\infty,  \\
\!\!\!\!\!\!(C6) \,  \limn\frac{\|e_n\|}{\alpha_n}=0.
\ea
$$

The following strong convergence result was proved by Boikanyo and Moro-{\c s}anu \cite{BoiMor11}.

\bthm\label{main-theorem}
Let $H$ be a Hilbert space, $A:H\to 2^H$ be a maximally monotone operator such that $zer(A)\ne\emptyset$ and 
$(x_n)$ be the HPPA defined by \eqref{HPPA-def-1}. Assume that the following hold:
\be
\item $(C0)$, $(C3)$ and $(C4)$;
\item $(C1)$ or, equivalently, $(C2)$; 
\item $(C5)$ or $(C6)$.
\ee
Then  $(x_n)$  converges  strongly to  $P_{zer(A)}u$.
\ethm

As one is interested in applications of HPPA  to practical problems, an important direction of research is the study of their complexity, 
hence obtaining explicit bounds on their convergence, on their asymptotic behaviour. The main results of this paper are such effective bounds. 

Firstly, we compute rates of asymptotic regularity of $(x_n)$, which turn out to be polynomial for the example we give in Section~\ref{hppa-example}. 
Furthermore, we prove quantitative versions of Theorem~\ref{main-theorem}, providing uniform rates of metastability (in the sense of Tao \cite{Tao07,Tao08}). 
As pointed out by Kohlenbach \cite{Koh20a},  Neumann \cite{Neu15} proves that one cannot obtain computable rates of convergence even for the simple case $H={\mathbb R}$.  Results from mathematical logic show that one can 
extract effective rates of metastability of $(x_n)$, defined as mappings $\Phi:\N \times \N^{\N}\to \N$ satisfying 
\[\forall k\in \N \, \forall g:\N\to\N \, \exists N\leq \Phi(k,g)\, \forall i, j\in [N,N+g(N)]\, 
\left(\|x_i-x_j\|\leq \frac{1}{k+1}\right).\]
As metastability of a sequence is non-effectively equivalent with the Cauchy property,   an effective rate of metastability is the best quantitative information one can obtain when rates of convergence are not to be expected, as it is the case with the HPPA $(x_n)$. 
Metastability was used by Tao \cite{Tao08} and Walsh \cite{Wal12} to obtain far-reaching generalizations of the von Neumann mean ergodic theorem. 

By letting $g(n)=L\in\N$, we obtain a mapping $\Phi_L:\N\to \N$ such that 
\[\forall k\in \N \, \,\exists N\leq \Phi_L(k)\, \,\forall i, j\in [N,N+L]\, 
\left(\|x_i-x_j\|\leq \frac{1}{k+1}\right).\]
We call such a function $\Phi_L$  a rate of $L$-metastability of $(x_n)$. By varying $L$, we get that $(x_n)$ is stable on arbitrarily long time-intervals. 

The effective bounds for the HPPA $(x_n)$ are computed by using methods from proof mining  (a research field in applied proof theory), transforming the arguments from
Boikanyo and Moro{\c s}anu's proof of Theorem~\ref{main-theorem} into new ones, providing the computational information which was previously hidden.  We refer to Kohlenbach's book \cite{Koh08} for a comprehensive introduction to proof mining and to \cite{Koh19,Koh20} for surveys on more recent applications. 
The proof mining research program is strongly related to Tao’s proposal  \cite{Tao07} of ``hard" analysis, based on quantitative, finitary arguments, obtained by 
the conversion, the finitization of infinitary, qualitative arguments in different classes of proofs; as Tao points out in \cite{Tao07}, this can be obtained using methods from proof theory,  known as proof interpretations.

Quantitative results on Halpern-type proximal point algorithms have been only recently obtained by 
Kohlenbach \cite{Koh20a} and the second author \cite{Pin21}. However,  proof mining  has been applied in a series of papers to obtain rates 
of asymptotic regularity and metastability for the Halpern iteration \cite{Leu07a,Koh11,KohLeu12a,LeuNic16,KohSip21,FerLeuPin19} and  rates of metastability and convergence for the Proximal Point Algorithm \cite{KohLeuNic18,LeuNicSip18,LeuSip18,LeuSip18b,Koh21}. These rates are computed both for Hilbert spaces and for more general classes of spaces: uniformly convex (and uniformly smooth) Banach spaces,   $CAT(0)$ spaces and  CAT$(\kappa)$ spaces (with $\kappa >0$).

We recall in the following some quantitative notions. Let $(a_n)_{n\in\N}$ be a 
sequence in $H$. 
If $(a_n)$ converges to $a\in H$, then a rate of convergence for 
$(a_n)$ is a mapping $\gamma:\N\to\N$ such that
\[\forall k\in\N\, \forall n\geq \gamma(k)\,\left(\|a_n-a\|\leq \frac1{k+1}\right).\]
If $(a_n)$ is Cauchy, then a Cauchy modulus of $(a_n)$ is a mapping $\chi:\N\to\N$ satisfying
\[\forall k\in\N\,\forall n\in\N\,\left(\|a_{\chi(k)+n}-a_{\chi(k)}\|\le \frac1{k+1}\right).\]
As in the case of the Cauchy property, one has also a metastable version of the convergence of a 
sequence.  If $(a_n)$ converges to $a\in H$, we say, following \cite{Koh11}, that a quasi-rate of 
convergence of $(a_n)$ is a mapping $\widetilde{\gamma}:\N\times \N^\N\to \N$ satisfying
\[
\forall k\in\N\,\forall g:\N\to\N\,\forall n\in[\widetilde{\gamma}(k,g),\widetilde{\gamma}(k,g)+g(\widetilde{\gamma}(k,g))]\, 
\left(\|a_n-a\|\leq \frac1{k+1}\right).
\]
Obviously, if $\gamma$ is a rate of convergence of $(a_n)$, then  $\widetilde{\gamma}(k,g):=\gamma(k)$ (for all $k,g$) 
is a quasi-rate of convergence of $(a_n)$.

Let $(b_n)$ be a sequence of nonnegative real numbers. If the series $\ds\sum_{n=0}^\infty b_n$ 
diverges, then a function $\theta:\N\to\N$ is called a rate 
of divergence of the series if $\ds \sum_{i=0}^{\theta(n)} b_i \geq n$ for all $n\in\N$. 
If $\limn b_n=0$, then $\gamma$ is a rate of convergence of $(b_n)$ if and only if for any $k\in\N$, we have that 
$\ds b_n\leq \frac1{k+1}$  for all $n\geq \gamma(k)$.

\section{Some useful results on $(x_n)$}\label{useful-xn}

In this section, $H$ is a real Hilbert space, $A:H\to2^{H}$ is a maximally monotone operator 
with $zer(A)\neq \emptyset$, $J_{\gamma A}$ is the resolvent of $\gamma A$  ($\gamma>0$) and  the sequence $(x_n)$ is given by \eqref{HPPA-def-1}. The following well-known resolvent identity will be useful:  for any $\beta,\gamma>0$ and $x\in H$, 
\beq
J_{\beta A}(x)=J_{\gamma A}\left(\frac{\gamma}{\beta}x+\left(1-\frac{\gamma}{\beta}\right)J_{\beta A}(x)\right) \label{resolvent-id}
\eeq

\subsection{Upper bounds on $(x_n)$}

\blem
Let $p$ be a zero of $A$. Then, for all $n\in\N$,
\bea
\|x_{n+1}-p\| &\leq & \alpha_n\|u-p\| +(1-\alpha_n)\|x_n-p\|+\|e_n\|, \label{ineq-xn-1} \\
\|x_n-p\| &\leq & \max\{ \|u-p\|, \|x_0-p\| \}+\sum_{i=0}^{n-1}\|e_i\|. \label{ineq-xn-2}
\eea
\elem
\begin{proof}
We have that 
\bua
\|x_{n+1}-p\|&=& \|\alpha_n u+(1-\alpha_n)J_{\beta_nA}x_n+e_n-p\|\\
&=& \|\alpha_n(u-p)+(1-\alpha_n)(J_{\beta_nA}x_n-J_{\beta_nA}p)+e_n\| \\
&&  \text{since~} p \text{~is a fixed point of~} J_{\beta_nA} \\
&\leq & \alpha_n\|u-p\| +(1-\alpha_n)\|x_n-p\|+\|e_n\| \\
&&  \text{since~} J_{\beta_nA} \text{~is nonexpansive}.
\eua
Thus, \eqref{ineq-xn-1} holds. We obtain \eqref{ineq-xn-2} by an easy induction on $n$. 
\end{proof}

In \cite{BoiMor10} it is shown that the sequence $(x_n)$ is bounded if $\ds \sum_{n=0}^\infty \|e_n\|<\infty$ or 
the sequence $\left(\frac{\|e_n\|}{\alpha_n}\right)$ is
bounded. Next it's a quantitative version of this result.

\begin{lemma}\label{quant-xn-bounded}
Let $p$ be a zero of $A$ and $D$ be a natural number. Define
\[
D_1:=\max\{\|u-p\|,\|x_0-p\|\}+D, \quad D_2:=\max\{2(\|u-p\|+D), \|x_0-p\|\}.
\]
Then 
\be
\item\label{quant-xn-bounded-sum} if $D$ is an upper bound on $\left(\sum\limits_{i=0}^{n}\|e_i\|\right)$, then $D_1$ is an upper bound on
the sequence $\left(\|x_n-p\|\right)$.
\item\label{quant-xn-bounded-frac} if $D$ is an upper bound on $\left( \dfrac{\|e_n\|}{\alpha_n}\right)$, then $D_2$ is an upper bound on
the sequence $\left(\|x_n-p\|\right)$.
\ee 
\end{lemma}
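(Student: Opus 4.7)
The plan is to handle the two parts separately using the two inequalities \eqref{ineq-xn-1} and \eqref{ineq-xn-2} already established.

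Part (i) is essentially immediate from \eqref{ineq-xn-2}: since by hypothesis $\sum_{i=0}^{n-1}\|e_i\|\leq D$ for every $n$, substituting directly yields
\[\|x_n-p\|\leq\max\{\|u-p\|,\|x_0-p\|\}+D=D_1,\]
with no induction needed.

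For part (ii) the bound \eqref{ineq-xn-2} is useless, since under the hypothesis $\|e_n\|/\alpha_n\leq D$ one only gets $\sum_{i=0}^{n-1}\|e_i\|\leq D\sum_{i=0}^{n-1}\alpha_i$, and in the intended applications $\sum\alpha_n=\infty$. Instead I start from \eqref{ineq-xn-1} and absorb the error into the coefficient of $\alpha_n$: using $\|e_n\|\leq D\alpha_n$ one obtains
\[\|x_{n+1}-p\|\leq\alpha_n(\|u-p\|+D)+(1-\alpha_n)\|x_n-p\|.\]
Since $\alpha_n\in(0,1]$ the right-hand side is a convex combination, hence bounded by $\max\{\|u-p\|+D,\|x_n-p\|\}$. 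A straightforward induction on $n$ then gives $\|x_n-p\|\leq\max\{\|u-p\|+D,\|x_0-p\|\}$, which is in particular at most $D_2$ (the factor $2$ in the statement is slack and will not be needed).

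The only (minor) conceptual obstacle is recognizing that the two cases must be treated via different starting inequalities: in (ii) one cannot control $\sum\|e_i\|$ at all, so the errors have to be tucked into the coefficient of $\alpha_n$ and absorbed by the convexity of the recursion, rather than summed separately as in (i). Once the convex-combination rewriting is in hand, the remainder is a one-line induction.
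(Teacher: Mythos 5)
Your part (i) is exactly the paper's (an immediate application of \eqref{ineq-xn-2}). For part (ii), your argument is correct but genuinely different from the paper's, and in fact cleaner. The paper argues at the level of squared norms: it writes $x_{n+1}-p$ as $\alpha_n\bigl(u-p+\tfrac{e_n}{\alpha_n}\bigr)+(1-\alpha_n)(J_{\beta_nA}x_n-J_{\beta_nA}p)$, applies the Hilbert-space inequality $\|x+y\|^2\leq\|y\|^2+2\langle x,x+y\rangle$ to get
\[
\|x_{n+1}-p\|^2\leq(1-\alpha_n)^2D_2^2+\alpha_nD_2\|x_{n+1}-p\|,
\]
and then completes the square in $\|x_{n+1}-p\|$ to extract the bound $D_2$. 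This is why the factor $2$ appears in $D_2$: the cross-term estimate needs $\|u-p\|+D\leq D_2/2$. Your route instead absorbs $\|e_n\|\leq D\alpha_n$ directly into the linear recursion \eqref{ineq-xn-1}, turning the right-hand side into a convex combination of $\|u-p\|+D$ and $\|x_n-p\|$, and a one-line induction gives the sharper bound $\max\{\|u-p\|+D,\|x_0-p\|\}$. So you not only avoid the inner-product identity and the square-completion, you also show the factor $2$ in $D_2$ is slack. Both arguments are valid; yours is more elementary (it works in any normed space, not just Hilbert) and yields a marginally better constant, which would propagate to slightly smaller constants downstream (e.g. in Proposition~\ref{lim-xn-zn-0-v2}), though nothing qualitative changes.
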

\begin{proof}
\be
\item Apply \eqref{ineq-xn-2}.
\item The proof is by induction on $n$. The case $n=0$ is trivial.

$n\Ra n+1$: We get that 
\bua
\!\!\!\!\!\! \|x_{n+1}-p\|^2 &=& \left\| \alpha_n u+(1-\alpha_n)J_{\beta_nA}x_n+e_n-p\right\|^2\\
&=& \left\| \alpha_n\left(u-p+\dfrac{e_n}{\alpha_n}\right) +
(1-\alpha_n)\left(J_{\beta_nA}x_n-J_{\beta_nA}p\right)\right\|^2\\
&\leq & (1-\alpha_n)^2\|J_{\beta_nA}x_n-J_{\beta_nA}p\|^2
+2\alpha_n\!\left\langle \!u-p+\dfrac{e_n}{\alpha_n}, x_{n+1}-p \!\right\rangle \\
&&\text{since~}  \|x+y\|^2\leq \|y\|^2+ 2\langle x, x + y\rangle \text{~for all~}x,y\in H\\
&\leq &(1-\alpha_n)^2\|x_n-p\|^2+2\alpha_n\left( \|u-p\|+\dfrac{\|e_n\|}{\alpha_n} \right)\|x_{n+1}-p\| \\
&\leq & (1-\alpha_n)^2D_2^2+\alpha_n D_2 \|x_{n+1}-p\| \\
&& \text{by the induction hypothesis and the definition of~}D_2.
\eua
It follows that 
\[
	\left( \|x_{n+1}-p\|-\frac{D_2}2\alpha_n \right)^2\leq (1-\alpha_n)^2D_2^2 + \frac{D_2^2}4\alpha_n^2,
\]
hence
\bua 
\|x_{n+1}-p\|  &\leq &  \frac{D_2}2\alpha_n+\sqrt{(1-\alpha_n)^2D_2^2 + \frac{D_2^2}4\alpha_n^2}\\
&\leq &  \frac{D_2}2\alpha_n+(1-\alpha_n)D_2+\frac{D_2}2\alpha_n = D_2.
\eua
\ee
\end{proof}

\subsection{An approximate fixed point sequence}

One of the main ingredients of Boikanyo and Moro{\c s}anu's proof of Theorem~\ref{main-theorem} is
a classical theorem of Browder \cite{Bro67} on the strong convergence of a sequence of 
approximants to  fixed points of nonexpansive mappings. Kohlenbach \cite{Koh11} applied
proof mining methods both to Browder's original proof and to a simplified proof of this theorem, 
due to Halpern \cite{Hal67}.
In the sequel, we apply Kohlenbach's quantitative version of Browder's theorem obtained by 
the logical analysis of Halpern's proof. 

For each $n\in\N$, let us define 
\[S_n:H\to H, \quad S_n(x)=\alpha_nu+(1-\alpha_n)J_{\beta A} x,\]
where $\beta>0$. 

Then for $\alpha_n\in(0,1]$, $S_n$ is a contraction, hence, by the Banach contraction principle, $S_n$ has 
a unique fixed point $z_n$. Thus, 
\beq
z_n=\alpha_nu+(1-\alpha_n)J_{\beta A} z_n \quad \text{for all~}n\in\N. \label{def-zn}
\eeq

\blem
Let $p\in zer(A)$. Then, for all $n\in\N$, 
\bea
\|z_n-p\| & \leq & 2\|u-p\|, \label{zn-p-bounded}\\
\|z_n-u\| & \leq & 3\|u-p\|, \label{zn-u-bounded}\\
\|J_{\beta A} z_n-u\| & \leq & 3\|u-p\|. \label{Jzn-u-bounded}
\eea
\elem
\begin{proof} We have that 
\bua
\|z_n-p\|^2 &=& \|\alpha_n(u-p)+(1-\alpha_n)(J_{\beta A} z_n-p)\|^2 \\
&\leq & (1-\alpha_n)^2\|J_{\beta A} z_n-p\|^2+2\lan \alpha_n(u-p), z_n-p\ran\\
& = & (1-\alpha_n)^2\|J_{\beta A} z_n-J_{\beta A}p\|^2+2\lan \alpha_n(u-p), z_n-p\ran\\
&\leq &(1-\alpha_n)^2 \|z_n-p\|^2+2\alpha_n\|u-p\|\|z_n-p\|.
\eua
We used above the fact that $\|x+y\|^2\leq \|y\|^2+ 2\langle x, x + y\rangle$ for all $x,y\in H$.
It follows that
\bua
\alpha_n(2-\alpha_n)\|z_n-p\| \leq 2\alpha_n\|u-p\|,
\eua
hence \eqref{zn-p-bounded}. One obtains immediately \eqref{zn-u-bounded}.  
To get \eqref{Jzn-u-bounded}, remark that 
\[\|J_{\beta A} z_n-u\|\leq \|J_{\beta A} z_n-p\|+\|u-p\|\leq \|z_n-p\|+\|u-p\|.\]
\end{proof}

The inequalities below are obtained in the proof of \cite[Theorem 2]{BoiMor11}.

\blem
For all $n\in\N$,
\bea
 \|x_{n+1}-z_n\| &\leq & (1-\alpha_n)\|x_n-z_n\|+
\dfrac{\alpha_n|\beta\!-\!\beta_n|}{\beta}\|u\!-\!J_{\beta A}z_n\|+\|e_n\|,
\label{ineq-zn-xn-1}\\
\|z_n-z_{n+1}\| &\leq &\frac{|\alpha_n-\alpha_{n+1}|}{\alpha_n}\|u\!-\!J_{\beta A}z_{n+1}\|.\label{ineq-zn-zn1}
\eea
\elem

The following quantitative result on the behaviour of $(z_n)$  is a special case of \cite[Theorem~4.2]{Koh11}. 

\bprop\label{main-quant-zn}
Let $d\in\N^*$ be such that $d\geq 3\|u-p\|$ for some zero $p$ of $A$. 
\be
\item\label{main-quant-zn-alphan-noninc} Assume that $(\alpha_n)$ is a nonincreasing sequence. Then
$(z_n)$ is Cauchy with rate of metastability $\Omega_{d}$, given by
\beq
\Omega_{d}(k,g) := \tilde{g}^{(d^2(k+1)^2)}(0),
\eeq
with $\tilde{g}(n):=n+g(n)$.
\item\label{main-quant-zn-alphan-zero} Assume that $\limn \alpha_n=0$ with quasi-rate of convergence $\chi$ and let $h:\N\to\N$ be such 
that $\alpha_n\geq \frac1{h(n)+1}$ for all $n\in\N$.
Then $(z_n)$ is Cauchy with rate of metastability $\widetilde{\Omega}_{\chi,h,d}$, given by
\beq
\widetilde{\Omega}_{\chi,h,d}(k,g) := \chi_g^M\left(g_{h,\chi_g}^{(4d^2(k+1)^2)}(0)\right)
\eeq
with $\chi_g(n) := \chi(n,g)$, $\chi_g^M(n) := \max\{\chi_g(i)\mid i\leq n\}$ and 
$g_{h,\chi_g}(n) :=	 \max\{h(i)\mid i\leq \chi_g(n)+g(\chi_g(n))\}$.
\ee
\eprop
\begin{proof}
Apply \cite[Theorem~4.2]{Koh11} with $v_0:=u$, $U:=J_{\beta A}$, $s_n:=1-\alpha_n$, 
$\eps:=\frac1{k+1}$ and $d$, $h$, $\chi_g$ as above.
\end{proof}

\section{Quantitative lemmas on sequences of real numbers}

In the sequel, $(a_n)_{n\in\N}$ is a sequence in $[0,1]$, $(b_n)_{n\in\N}$ is a sequence of real numbers and 
$(c_n)_{n\in\N}, (s_n)_{n\in\N}$ are sequences of nonnegative real numbers satisfying, for all $n\in\N$,
\beq
s_{n+1}\leq (1-a_n)s_n + a_n b_n+c_n. \label{def-sn-cn}
\eeq

The following lemma from \cite{Xu02} is one of the main tools used in the proof of Theorem~\ref{main-theorem}.

\blem\label{lemma-Xu02}
Assume that $\sum\limits_{n=0}^\infty a_n$ diverges (or equivalently, $\ds\prod_{n=0}^\infty (1-a_n)=0$ if $a_n<1$ for all $n\in\N$), $\lsupn b_n \leq 0$ 
and $\sum\limits_{n=0}^\infty c_n$ converges. Then $\limn s_n =0$.
\elem

We give in the sequel quantitative versions of Lemma~\ref{lemma-Xu02}. We remark that for a particular 
case of this lemma, obtained by letting $c_n:=0$, the first author and Kohlenbach have already 
proved quantitative versions in \cite{KohLeu12a}.  The proofs of the following results are similar with those 
of \cite[Lemmas 5.2, 5.3]{KohLeu12a}. However, for the sake of completeness, we give them in this paper.

\blem\label{quant-Xu02-lem1}
Let $p,N\in\N$ be such that 
\beq \label{quant-Xu02-lem1-hyp}
b_n\leq \frac1{p+1} \quad \text{for all~}n\geq N. 
\eeq 
Then for all $m,n\in\N$  with $n\geq N$,
\[
s_{n+m+1}\leq \left( \prod_{i=n}^{n+m}(1-a_i) \right)s_n + 
\left( 1-\prod_{i=n}^{n+m}(1-a_i) \right)\frac{1}{p+1}+\sum\limits_{i=n}^{n+m}c_i.
\]
\elem
\begin{proof}
The proof is by induction on $m$. The case $m=0$ is trivial.\\
$m\Ra m+1$:  For simplicity, let us denote 
\bua
A:=\prod_{i=n}^{n+m}(1-a_i).
\eua
We get that
\bua 
s_{n+m+2} &\leq & (1-a_{n+m+1})s_{n+m+1}+ a_{n+m+1} b_{n+m+1} + c_{n+m+1} \\
&\leq & (1-a_{n+m+1})\left[ As_n + 
\left( 1-A \right)\frac{1}{p+1}+\sum\limits_{i=n}^{n+m}c_i \right] + \, a_{n+m+1}\frac{1}{p+1}\\
&&  +c_{n+m+1} \qquad \text{by the induction hypothesis and \eqref{quant-Xu02-lem1-hyp}} \\
& = & \left( \prod_{i=n}^{n+m+1}(1-a_i) \right)s_n+
\left((1-a_{n+m+1})\left( 1-A\right)+a_{n+m+1}\right)\frac{1}{p+1}\\
&&+ (1-a_{n+m+1})\sum\limits_{i=n}^{n+m}c_i+c_{n+m+1}\\
&\leq &  \left( \prod_{i=n}^{n+m+1}(1-a_i) \right)s_n + \left( 1-\prod_{i=n}^{n+m+1}(1-a_i) \right)\frac{1}{p+1}+
\sum\limits_{i=n}^{n+m+1}c_i.
\eua
\end{proof}

\blem\label{quant-lem-Xu02-lem2}
Let $M\in\N^*$ and $\psi, \chi:\N\to\N$ be such that 
\be
\item\label{M-ub-sn}  $M$ is an upper bound on $(s_n)$;
\item\label{psi-bn} for all $k\in\N$ and all $n\in\N$  with $n\geq \psi(k)$, $b_n \leq \frac{1}{k+1}$;
\item\label{chi-cn} $\chi$ is a Cauchy modulus for $\left(\tilde{c}_n:=\sum\limits_{i=0}^n c_i\right)$.
\ee
Define 
\beq \label{def-lsn}
\lsn:\N\to\N, \quad \lsn(k):=\max\{\psi(3k+2), \chi(3k+2)+1\}.
\eeq
Then for all $k,m\in\N$ and all $n\in\N$  with $n\geq \lsn(k)$,
\[s_{n+m+1}\leq M \prod_{i=n}^{n+m} (1-a_i)+\frac{2}{3(k+1)}.\]
\elem
\begin{proof}
Let $k, m,n\in\N$ be such that $n\geq \lsn(k)$. Since $\lsn(k)\geq \psi(3k+2)$, we get from \eqref{psi-bn} that
$b_n\leq \frac{1}{3(k+1)}$.  We apply Lemma~\ref{quant-Xu02-lem1} to obtain that
\bua
s_{n+m+1} &\leq & \left( \prod_{i=n}^{n+m}(1-a_i) \right)s_n + 
\left( 1-\prod_{i=n}^{n+m}(1-a_i) \right)\frac{1}{3(k+1)}+\sum\limits_{i=n}^{n+m}c_i\\
&\leq &  M\prod_{i=n}^{n+m}(1-a_i)+\frac{1}{3(k+1)}+\sum\limits_{i=n}^{n+m}c_i \quad \text{by \eqref{M-ub-sn}}.
\eua
As $n\geq \lsn(k)\geq \chi(3k+2)+1$, we apply \eqref{chi-cn} to get, 
by letting $r:=n+m-\chi(3k+2)$, that
\bua
\sum\limits_{i=n}^{n+m}c_i &\leq & \sum\limits_{i=\chi(3k+2)+1}^{n+m}c_i = 
\sum\limits_{i=0}^{n+m}c_i- 
\sum\limits_{i=0}^{\chi(3k+2)}c_i = \tilde{c}_{\chi(3k+2)+r}-\tilde{c}_{\chi(3k+2)}\\
&\leq & \frac{1}{3(k+1)}.
\eua
The conclusion follows. 
\end{proof}

\bprop\label{quant-lem-Xu02-prop1}
In the hypothesis of Lemma~\ref{quant-lem-Xu02-lem2}, assume, moreover, that $\sum\limits_{n=0}^\infty a_n$  diverges with rate of 
divergence $\theta:\N\to\N$. Define $\Sigma:=\Sigma_{M,\theta,\psi,\chi}$ by 
\bea
\Sigma:\N\to\N, \quad \Sigma(k)=\theta(\lsn(k)+\lceil \ln(3M(k+1))\rceil)+1,
\eea
where $\lsn$ is given by \eqref{def-lsn}. 

Then $\limn s_n=0$ with rate of convergence $\Sigma$. 
\eprop
\begin{proof}
Let $k\in\N$ be arbitrary. Denote
\[K:=\Sigma(k)-\lsn(k)-1=\theta(\lsn(k)+\lceil \ln(3M(k+1))\rceil)-\lsn(k).\]
As $a_n \leq 1$,  we have that $\theta(n+1)\geq n$ for all $n$.  Thus, $K\in\N$. 
For all $m\geq K$, we get that
\bua
\sum\limits_{i=\lsn(k)}^{\lsn(k)+m}a_i & \geq & \sum\limits_{i=\lsn(k)}^{\lsn(k)+K} a_i =
\sum\limits_{i=0}^{\theta(\lsn(k)+\lceil \ln(3M(k+1))\rceil)}a_i - \sum\limits_{i=0}^{\lsn(k)-1}a_i \\
		& \geq & \lsn(k)+\lceil \ln(3M(k+1))\rceil - \sum\limits_{i=0}^{\lsn(k)-1}a_i \\
		& \geq &   \ln (3M(k+1)),
\eua
hence, 
\bua
\prod_{i=\lsn(k)}^{\lsn(k)+m} (1-a_i)& \leq & \exp \left( -\sum\limits_{i=\lsn(k)}^{\lsn(k)+m}a_i\right)
 \quad\text{since~} 1-x\leq \exp(-x) \text{~for~}x\geq 0\\
& \leq & \dfrac{1}{3M(k+1)}.
\eua
Let $n\geq \Sigma(k)$. Applying Lemma~\ref{quant-lem-Xu02-lem2} with $m:=n-\lsn(k)-1\geq K$, it follows that
\bua
s_n &=& s_{\lsn(k)+m+1} \leq M\prod_{i=\lsn(k)}^{\lsn(k)+m} (1-a_i)+\frac{2}{3(k+1)}\leq \frac1{k+1}.
\eua
\end{proof}

In the sequel, we give another quantitative version of Lemma~\ref{lemma-Xu02}, when the hypothesis  
$\prod\limits_{n=0}^\infty (1-a_n)=0$ is used. Let us denote $P_n:=\prod\limits_{j=0}^n(1-a_j)$ for all $n\in\N$. Then 
$\prod\limits_{n=0}^\infty (1-a_n)=0$ if and only if $\limn P_n=0$. A rate of convergence of 
$\prod\limits_{n=0}^\infty (1-a_n)$ 
towards $0$ will be a rate of convergence of $(P_n)$ towards $0$.

\bprop\label{quant-lem-Xu02-prop2}
In the hypothesis of Lemma~\ref{quant-lem-Xu02-lem2}, assume, furthermore, that 
\be
\item\label{quant-lem-Xu02-prop2-less-1} $a_n<1$ for all $n\in\N$;
\item\label{quant-lem-Xu02-prop2-prod} $\prod\limits_{n=0}^\infty (1-a_n)=0$ with rate of convergence $\theta$;
\item\label{quant-lem-Xu02-prop2-N0} $\lsn_0:\N\to\N^*$ is such that for all $k\in\N$,
$\frac1{\lsn_0(k)}\leq P_{\lsn(k)-1}$.
\ee
Define $\widetilde{\Sigma}:=\widetilde{\Sigma}_{M,\theta,\psi,\chi,\lsn_0}$ by 
\bea
\widetilde{\Sigma}:\N\to\N, \quad \widetilde{\Sigma}(k)=
\max\left\{\theta\left(3M\lsn_0(k)(k+1)-1\right),\lsn(k)\right\}+1.
\eea
Then $\limn s_n=0$ with rate of convergence $\widetilde{\Sigma}$.
\eprop
\begin{proof}
Let $k\in\N$ and $n\geq \widetilde{\Sigma}(k)$. Applying Lemma~\ref{quant-lem-Xu02-lem2} with $m:=n-\lsn(k)-1\in\N$, 
we get that
\bua
s_n &=& s_{\lsn(k)+m+1} \leq M \prod\limits_{i=\lsn(k)}^{\lsn(k)+m} (1-a_i)+\frac{2}{3(k+1)} = 
\frac{MP_{\lsn(k)+m}}{P_{\lsn(k)-1}}+\frac{2}{3(k+1)}\\
&\leq & M\lsn_0(k) P_{\lsn(k)+m}+\frac{2}{3(k+1)} \quad\text{by \eqref{quant-lem-Xu02-prop2-N0}}\\
&\leq & \frac1{k+1},
\eua
as $\lsn(k)+m=n-1\geq \theta(3M\lsn_0(k)(k+1)-1)$, hence, by \eqref{quant-lem-Xu02-prop2-prod}, 
$P_{\lsn(k)+m}\leq \frac1{3M\lsn_0(k)(k+1)}$.
\end{proof}

In the proof of our second main theorem, we shall need a particular case of the inequality \eqref{def-sn-cn}, 
obtained by letting $c_n:=0$ for all $n\in\N$. By an easy adaptation of the previous proofs, 
we obtain the following quantitative result.

\bprop\label{quant-lem-Xu02-cn-0}
Let $(a_n)_{n\in\N}$ be a sequence in $(0,1)$, $(b_n)_{n\in\N}$ be a sequence of real numbers and 
$(s_n)_{n\in\N}$ be a sequence of nonnegative real numbers satisfying, for all $n\in\N$,
\beq
s_{n+1}\leq (1-a_n)s_n + a_n b_n. \label{def-sn}
\eeq
Assume that $M\in\N^*$ is an upper bound on $(s_n)$ and that $\psi^*:\N\to\N$ is such that $b_n \leq \frac{1}{p+1}$
for all $p,n\in\N$  with $n\geq \psi^*(p)$. Define
$$\lsn^*:\N\to\N, \quad \lsn^*(k)=\psi^*(2k+1).$$

The following hold:
\be
\item\label{quant-lem-Xu02-cn-0-C1q} If $\sum\limits_{n=0}^\infty a_n$  diverges with rate of 
divergence $\theta$, then $\limn s_n=0$ with rate of convergence 
$$\Sigma^*(k)=\theta(\lsn^*(k)+\lceil \ln(2M(k+1))\rceil)+1.$$
\item\label{quant-lem-Xu02-cn-0-C2q} If $\prod\limits_{n=0}^\infty (1-a_n)=0$ with rate of convergence $\theta$ and $\lsn^*_0:\N\to\N^*$ is such that 
$\ds \frac1{\lsn^*_0(k)}\leq P_{\lsn^*(k)-1}$ for all $k\in\N$, then $\limn s_n=0$ with rate of convergence 
$$\widetilde{\Sigma^*}(k)=
\max\left\{\theta\left(2M\lsn^*_0(k)(k+1)-1\right),\lsn^*(k)\right\}+1.$$
\ee
\eprop

\section{Main results}\label{section-main}

In this section we  compute uniform effective bounds  on the asymptotic behaviour of the HPPA $(x_n)$  and of the approximate fixed 
point sequence $(z_n)$. These culminate with quantitative versions of Theorem~\ref{main-theorem}, 
providing uniform effective rates of metastability for the HPPA $(x_n)$.  The fact that one can obtain such effective bounds  
is guaranteed by general logical metatheorems for Hilbert spaces proved by Kohlenbach \cite{Koh05}.

We need quantitative versions of the hypotheses (C0) - (C6) of Theorem~\ref{main-theorem}:\\

\bt{lll}
$(C0_{q})$ &  $\ds\limn\alpha_n=0$ with rate of convergence $\rateczeroq$, \\[2mm]
$(C0^*_{q})$ &  $\ds\limn\alpha_n=0$ with quasi-rate of convergence $\rateczeroqs$, \\[2mm]
$(C1_q)$ & $\ds\sum\limits_{n=0}^\infty \alpha_n$ diverges with rate of divergence $\rateconeq$,\\[2mm]
$(C2_q)$ & $\ds\prod_{n=0}^\infty (1-a_n)=0$ with rate of convergence $\ratectwoq$, \\[2mm]
$(C3_q)$ & $\ds\limn \frac{|\alpha_{n+1}-\alpha_n|}{\alpha_n^2} =0$ with rate of convergence $\ratecthreeq$, \\[2mm]
$(C4_q)$ &  $\ds\limn\beta_n=\beta>0$ with rate of convergence $\ratecfourq$, \\[2mm]
$(C5_q)$ & $\ds\sum\limits_{n=0}^\infty \|e_n\|<\infty$ with Cauchy modulus $\ratecfiveq$, \\[2mm]
$(C6_q)$ & $\ds\limn\frac{\|e_n\|}{\alpha_n}=0$ with rate of convergence $\ratecsixq$.\\[3mm]
\et

We assume for the rest of this section that $H$ is a Hilbert space, $A:H\to 2^H$ is a maximally monotone operator such that $zer(A)\ne\emptyset$, $(x_n)$ is defined by 
\eqref{HPPA-def-1} and $b\in\N^*$  is such that 
\beq
b\geq \max\{\|x_0-p\|, \ \|u-p\|\} \text{~for some zero~} p \text{~of~} A. \label{b-hyp-main} 
\eeq
We suppose, moreover, that  $(C4_q)$ holds and $(z_n)$ is defined by \eqref{def-zn} with $\beta=\limn \beta_n$. 

\subsection{Rates of convergence for $(\|x_n-z_n\|)$}

One of the main steps in the proof of Theorem~\ref{main-theorem} is to obtain that 
\beq \limn\|x_n-z_n\|=0.  \label{xn-zn-0}
\eeq 
In the sequel we give quantitative versions of \eqref{xn-zn-0}, consisting of uniform effective 
rates of convergence.

The first quantitative result is the following.

\bprop\label{lim-xn-zn-0-v1}
Assume that $(C3_q)$, $(C5_q)$ hold and $\ell\in\N, D\in\N^*$   satisfy 
\beq
\beta\geq \frac{1}{\ell +1}, \quad D\geq \sum\limits_{i=0}^{\ratecfiveq(0)}\|e_i\|+1. \label{def-ell-D}
\eeq
Define
\bua
\psi(k) &:=& \max\{ \ratecfourq(6b(\ell +1)(k+1)-1),\ratecthreeq(6b(k+1)-1)\}, \\ 
\lsn(k) &:=& \max\{\psi(3k+2), \ratecfiveq(3k+2)+1\}. 
\eua
The following hold:
\be
\item\label{lim-xn-zn-0-v1-C1q} If $(C1_q)$ holds, then $\limn \|x_n-z_n\|=0$ with rate of convergence 
$\Theta$ given  by
\beq
\Theta(k) := \rateconeq(\lsn(k)+\lceil \ln(3(D+5b)(k+1))\rceil)+1. \label{def-Theta-main}
\eeq 
\item\label{lim-xn-zn-0-v1-C2q}  If $(C2_q)$ holds and $\lsn_0:\N\to\N^*$ is such that 
\beq\frac1{\lsn_0(k)}\leq \prod\limits_{j=0}^{\lsn(k)-1}(1-\alpha_j) \text{~for all~} k\in\N, \label{hypothesis-lsn0}
\eeq
then  $\limn \|x_n-z_n\|=0$ with rate of convergence 
$\widetilde{\Theta}$ defined by 
\beq
\widetilde{\Theta}(k) := \max\left\{\ratectwoq\left(3(D+5b)\lsn_0(k)(k+1)-1\right),\lsn(k)\right\}+1. \label{def-wTheta-main}
\eeq
\ee
\eprop
\begin{proof}
By \eqref{ineq-zn-xn-1}, \eqref{ineq-zn-zn1}, \eqref{Jzn-u-bounded} and the hypothesis on $b$, we get that for all $n\in\N$,
\bua
\|x_{n+1}-z_{n+1}\| &\leq & \|x_{n+1}-z_n\|+\|z_n-z_{n+1}\| \\
&\leq &  (1-\alpha_n)\|x_n-z_n\|+\alpha_nb_n+\|e_n\|,
\eua
where $\ds b_n:= 3b\left(\frac{|\beta-\beta_n|}{\beta}+\frac{|\alpha_n-\alpha_{n+1}|}{\alpha_n^2}\right)$. 
We verify in the sequel that, by letting 
\bce
$s_n:=\|x_n-z_n\|$, $a_n:=\alpha_n$, $b_n$ as above and $c_n:=\|e_n\|$, 
\ece
the hypotheses of Lemma~\ref{quant-lem-Xu02-lem2} are satisfied.

Applying $(C5q)$, one can easily see that, for all $n\in\N$, 
$\sum\limits_{i=0}^{n}\|e_i\|\leq 1+\sum\limits_{i=0}^{\ratecfiveq(0)}\|e_i\|\leq D$.
Hence, $D$ is an upper bound on $\left(\sum\limits_{i=0}^{n}\|e_i\|\right)$. \\
Let $p\in zer(A)$ satisfy the hypothesis on $b$. We get that 
\bua
\|x_n-z_n\|&\leq & \|x_n-p\|+\|p-u\|+\|u-z_n\| \\
&\leq & D+5b \quad \text{by Lemma~\ref{quant-xn-bounded}.\eqref{quant-xn-bounded-sum}
and \eqref{zn-u-bounded}}.
\eua

Let $n\geq\psi(k)$ be arbitrary. Applying $(C3_q)$, $(C4_q)$ and the hypothesis on $\ell$, 
we get that 
\bua
\frac{|\alpha_n-\alpha_{n+1}|}{\alpha_n^2} \leq  \frac{1}{6b(k+1)} \quad \text{and} \quad 
\frac{|\beta-\beta_n|}{\beta} \leq  (\ell+1)|\beta-\beta_n|\leq \frac{1}{6b(k+1)}.
\eua
It follows that $b_n \leq \frac{1}{k+1}$ for all $n\geq\psi(k)$. 

Finally, by $(C5_q)$, we have that $\ratecfiveq$ is a Cauchy modulus for $\sum\limits_{n=0}^\infty \|e_n\|$.

Thus, the hypotheses of Lemma~\ref{quant-lem-Xu02-lem2} hold. We get \eqref{lim-xn-zn-0-v1-C1q} 
by applying Proposition~\ref{quant-lem-Xu02-prop1} 
with $M:=D+5b$, $\chi:=\ratecfiveq$, $\theta:=\rateconeq$, 
and $\psi, \lsn$ as above. Furthermore, \eqref{lim-xn-zn-0-v1-C2q} is obtained by applying 
Proposition~\ref{quant-lem-Xu02-prop2} with $M:=D+5b$, $\chi:=\ratecfiveq$, $\theta:=\ratectwoq$, 
and $\psi, \lsn, \lsn_0$ as above.
\end{proof}

A result similar to Proposition~\ref{lim-xn-zn-0-v1} can be obtained by replacing, in the hypothesis, 
$(C5_q)$ with $(C6_q)$.

\bprop\label{lim-xn-zn-0-v2}
Assume that $(C3_q)$, $(C6_q)$ hold and $\ell\in\N, D^*\in\N^*$   satisfy 
\beq
\beta\geq \frac{1}{\ell +1}, \quad D^* \geq \max_{i\leq\ratecsixq(0)}\left\{\frac{\|e_i\|}{\alpha_i},1\right\}. \label{def-ell-Dstar}
\eeq
Define
\bua
\psi^*(k) &:=& \max\{\ratecfourq(9b(\ell+1)(k+1)-1),\ratecthreeq(9b(k+1)-1), \ratecsixq(3k+2)\}, \\
\lsn^*(k) &:=& \psi^*(2k+1).
\eua
The following hold:
\be
\item\label{lim-xn-zn-0-v2-C1q} If $(C1_q)$ holds, then $\limn \|x_n-z_n\|=0$ with rate of convergence 
$\Theta^*$ defined by
\beq
\Theta^*(k) := \rateconeq(\lsn^*(k)+\lceil \ln(2(2D^*+6b)(k+1))\rceil)+1. \label{def-Theta-star-main}
\eeq
\item\label{lim-xn-zn-0-v2-C2q} If $(C2_q)$ holds and and $\lsn^*_0:\N\to\N^*$ is such that
\beq
\frac1{\lsn^*_0(k)}\leq \prod\limits_{j=0}^{\lsn^*(k)-1}(1-\alpha_j) \text{~for all~}k\in\N, \label{hypothesis-lsnstar0}
\eeq 
then 
$\limn \|x_n-z_n\|=0$ with rate of convergence $\widetilde{\Theta^*}$ defined by 
\beq
\widetilde{\Theta^*}(k) := \max\left\{\ratectwoq\left(2(2D^*+6b)\lsn^*_0(k)(k+1)-1\right),\lsn^*(k)\right\}+1.
\label{def-wTheta-star-main}
\eeq
\ee
\eprop
\begin{proof}
As in the proof of Proposition~\ref{lim-xn-zn-0-v2}, we apply \eqref{ineq-zn-xn-1}, \eqref{ineq-zn-zn1}, 
\eqref{Jzn-u-bounded} and the hypothesis on $b$ to obtain, for all $n\in \N$,
\bua
\|x_{n+1}-z_{n+1}\| &\leq & \|x_{n+1}-z_n\|+\|z_n-z_{n+1}\| \leq  (1-\alpha_n)\|x_n-z_n\|+\alpha_nb_n,
\eua
where $$\ds b_n:= 3b\left(\frac{|\beta-\beta_n|}{\beta}+\frac{|\alpha_n-\alpha_{n+1}|}{\alpha_n^2}\right)+
\frac{\|e_n\|}{\alpha_n}.$$
We shall apply Proposition~\ref{quant-lem-Xu02-cn-0} with  $s_n:=\|x_n-z_n\|$, $a_n:=\alpha_n$, $b_n$ as above.

By the hypothesis on $D^*$ and $(C6_q)$, one can see immediately that $D^*$ is an upper bound on  
$\left(\frac{\|e_n\|}{\alpha_n}\right)$.

If $p$ is a zero of $A$ as in the hypothesis, we get that 
\bua
\|x_n-z_n\|&\leq & \|x_n-p\|+\|p-u\|+\|u-z_n\| \\
&\leq & \|x_n-p\|+4b \quad \text{by \eqref{zn-u-bounded}}\\
&\leq & 2(b+D^*)+4b  \quad  \text{by Lemma~\ref{quant-xn-bounded}.\eqref{quant-xn-bounded-frac}}\\
&=& 2D^*+6b.
\eua
Let $n\geq \psi^*(k)$. We get that
\bua
b_n &=& 3b\left(\frac{|\beta-\beta_n|}{\beta}+\frac{|\alpha_n-\alpha_{n+1}|}{\alpha_n^2}\right)+
\frac{\|e_n\|}{\alpha_n}\\
&\leq & \frac{2}{3(k+1)}+ \frac{\|e_n\|}{\alpha_n} \quad \text{by the definition of~} \psi^*, (C3_q) 
\text{~and~} (C4_q)\\
&\leq & \frac{1}{k+1} \quad \text{by~} (C6_q) \text{~and the fact that~} \psi^*(k)\geq \ratecsixq(3k+2).
\eua
Thus, we can can apply Proposition~\ref{quant-lem-Xu02-cn-0}.\eqref{quant-lem-Xu02-cn-0-C1q}  with 
$M:=2D^*+6b$, $\theta:=\rateconeq$ and $\psi^*, \lsn^*$ as above to get \eqref{lim-xn-zn-0-v2-C1q}, and 
Proposition~\ref{quant-lem-Xu02-cn-0}.\eqref{quant-lem-Xu02-cn-0-C2q} with 
$M:=2D^*+6b$, $\theta:=\ratectwoq$ and $\psi^*, \lsn^*$, $\lsn_0^*$ as above to prove \eqref{lim-xn-zn-0-v2-C2q}.
\end{proof}

\subsection{Rates of asymptotic regularity}

One of the most useful concepts in metric fixed point theory and 
convex optimization is the asymptotic regularity \cite{BroPet66,BorReiSha92}: if $T:H\to H$ is a mapping and 
$(x_n)$ is a sequence in $H$, then $(x_n)$ is asymptotically regular with respect to $T$ if $\limn \|x_n-Tx_n\|=0$.
We can extend this notion to countable families of mappings: $(x_n)$ is asymptotically regular with respect to a
a family $(T_n:H\to H)_{n\in\N}$ of mappings if $\limn \|x_n-T_nx_n\|=0$. 
Rates of convergence of $(\|x_n-Tx_n\|)$, $(\|x_n-T_nx_n\|)$ towards $0$ are said to be rates of asymptotic regularity.

The following result shows that rates of asymptotic regularity can be computed, in the presence of $(C0_q)$,
from rates of convergence of the sequence $(\|x_n-z_n\|)$.

\bthm\label{main-asympt-reg}
Assume that $(C0_q)$ holds, $\ell\in \N^*$ is such that $\beta\geq \frac{1}{\ell+1}$ and 
that $\limn \|x_n-z_n\|=0$ with rate of convergence $\Lambda$. 

Then
\be
\item $(x_n)$ is asymptotically regular with respect to $J_{\beta A}$ with rate of asymptotic regularity 
$\Sigma$ defined by 
\[\Sigma(k) := \max\{\rateczeroq(6b(k+1)-1), \Lambda(4k+3)\}.\]
\item $(x_n)$ is asymptotically regular with respect to $(J_{\beta_n A})$ with rate of asymptotic regularity  $\Sigma_2$ defined by
\[\Sigma^*(k) := \max\{\ratecfourq(\ell),\Sigma(2k+1)\}.\]
\item  For every $i\in\N$, $(x_n)$ is asymptotically regular with respect to $J_{\beta_i A}$ with rate of asymptotic regularity 
$\Sigma_i$ defined by 
\[\Sigma_i(k) := \Sigma((1+(\ell+1)M_i)(k+1)-1),\]
where $M_i\in\N$ is such that $M_i\geq |\beta-\beta_i|$.
\ee
\ethm
\begin{proof}
\be
\item Let $k\in\N$ be arbitrary. For $n\geq \rateczeroq(6b(k+1)-1)$, using \eqref{Jzn-u-bounded} and 
the fact that $b$ satisfies \eqref{b-hyp-main}, we get that
\begin{equation*}
\|z_n-J_{\beta A}z_n\|=\alpha_n\|u-J_{\beta A}z_n\|\leq \alpha_n\cdot 3b\leq\frac{1}{2(k+1)}.
\end{equation*}
On the other hand, for $n\geq \Lambda(4k+3)$, we have that $\|x_n-z_n\|\leq \frac{1}{4(k+1)}$. 
It follows that for all $n\geq \Sigma(k)$,
\bua
\|x_n-J_{\beta A}x_n\|&\leq & \|x_n-z_n\|+\|z_n-J_{\beta A}z_n\|+\|J_{\beta A}z_n-J_{\beta A}x_n\|\\
&\leq & 2\|x_n-z_n\|+\|z_n-J_{\beta A}z_n\|\\
&\leq &  \frac{2}{4(k+1)}+\frac{1}{2(k+1)}=\frac{1}{k+1}.
\eua
\item We first note that for all $n\in\N$, 
\bua
\|J_{\beta A}x_n-J_{\beta_n A}x_n\| &\stackrel{\eqref{resolvent-id}}{=}& \left\|J_{\beta_n A}\left(\frac{\beta_n}{\beta}x_n+
\left(1-\frac{\beta_n}{\beta}\right)J_{\beta A}x_n\right)
-J_{\beta_n A}x_n\right\|\\
&\leq &  \left|1-\frac{\beta_n}{\beta}\right|\|x_n-J_{\beta A}x_n\|\\
&\leq &   (\ell+1)|\beta-\beta_n| \|x_n-J_{\beta A}x_n\|.
\eua
Let $k\in\N$ be arbitrary and $n\geq \Sigma^*(k)$. It follows that
\bua
\|x_n-J_{\beta_nA}x_n\| & \leq & \|x_n-J_{\beta A}x_n\|+\|J_{\beta A}x_n-J_{\beta_n A}x_n\| \\
& \leq & \|x_n-J_{\beta A}x_n\| + (\ell+1)|\beta-\beta_n|\|x_n-J_{\beta A}x_n\|\\
& \leq &  2 \|x_n-J_{\beta A}x_n\| \quad \text{since~} n \geq \ratecfourq(\ell), \text{~so~} |\beta-\beta_n|\leq \frac1{\ell+1}\\
& \leq & \frac1{k+1} \quad \text{since~} n\geq \Sigma(2k+1).
\eua
\item The proof is similar with the one of (ii).  Let $k\in\N$ and $n\geq \Sigma_i(k)$. Then 
\bua
\|x_n-J_{\beta_iA}x_n\| & \leq & \|x_n-J_{\beta A}x_n\|+\|J_{\beta A}x_n-J_{\beta_i A}x_n\| \\
& \leq & \|x_n-J_{\beta A}x_n\| + \left|1-\frac{\beta_i}{\beta}\right|\|x_n-J_{\beta A}x_n\|\\
& \leq & \|x_n-J_{\beta A}x_n\| + (\ell+1)M_i\|x_n-J_{\beta A}x_n\|\\
&=& (1+(\ell+1)M_i)\|x_n-J_{\beta A}x_n\|\\
& \leq & \frac1{k+1} \quad \text{since~} n\geq \Sigma((1+(\ell+1)M_i)(k+1)-1).
\eua
\ee
\end{proof}

\subsection{Quantitative versions of Theorem~\ref{main-theorem}}

We prove first a useful general result. 

\bprop\label{lemma-meta+conv-meta}
Let $(u_n)_{n\in \N}, (v_n)_{n\in \N}$ be sequences in $H$ such that 
\be
\item $(u_n)$ is Cauchy with rate of metastability $\Omega$;
\item $\limn \|u_n-v_n\|=0$ with rate of convergence $\vp$.
\ee
Then $(v_n)$ is Cauchy with rate of metastability $\Gamma$ given by 
\[
\Gamma(k,g)=\max\{\vp(3k+2),\ \Omega(3k+2,\widehat{g}_k)\},
\]
with $\widehat{g}_k(n):=\max\{\vp(3k+2), n\}-n+g(\max\{\vp(3k+2), n\})$.
\eprop
\begin{proof}
First, let us remark that for all $m,n\in\N$,
\beq\label{lemma-meta-1}
\|v_m-v_n\|\leq \|v_m-u_m\|+\|u_m-u_n\|+\|u_n-v_n\|.
\eeq
Let $k\in\N$ and $g:\N\to\N$ be arbitrary. Since $\Omega$  is a rate of metastability for $(u_n)$, 
there exists $N_0\leq \Omega(3k+2,\widehat{g}_k)$ such that
\[
\forall i, j\in [N_0,N_0+\widehat{g}_k(N_0)]\, \left(\|u_i-u_j\|\leq \frac{1}{3(k+1)}\right).
\]
Define $$N:=\max\{\vp(3k+2), N_0\}.$$
Since $N\geq N_0$ and $N_0+\widehat{g}_k(N_0)=N+g(N)$, we 
have that $[N, N+g(N)]\subseteq [N_0,N_0+\widehat{g}_k(N_0)]$. 
Hence,
\beq\label{lemma-meta-2}
\forall i, j\in [N, N+g(N)]\, \left(\|u_i-u_j\|\leq \frac{1}{3(k+1)}\right).
\eeq
On the other hand, since $N\geq \vp(3k+2)$, it follows that
\beq\label{lemma-meta-3}
\forall n\geq N\, \left(\|u_n-v_n\|\leq \frac{1}{3(k+1)}\right).
\eeq
Finally, apply \eqref{lemma-meta-1}, \eqref{lemma-meta-2} and \eqref{lemma-meta-3} to get that 
\[
\forall i,j\in[N, N+g(N)]\, \left(\|v_i-v_j\|\leq \frac{1}{k+1}\right).
\]
Since, obviously, $N\leq \Gamma(k,g)$, we conclude that $\Gamma$ is a rate of metastability 
for $(v_n)$.
\end{proof}

The first quantitative version of Theorem~\ref{main-theorem} is the following.

\begin{theorem}\label{main-theorem-quant-1}
Let $H$ be a Hilbert space, $A:H\to 2^H$ be a maximally monotone operator such that $zer(A)\ne\emptyset$, $(x_n)$ be defined 
by \eqref{HPPA-def-1} and  $b\in\N^*$  be such that $b\geq \max\{\|x_0-p\|, \ \|u-p\|\}$ for some zero $p$ of $A$. 
Suppose that $(C3_q)$, $(C4_q)$, $(C5_q)$ hold, $(\alpha_n)$ is nonincreasing and that $\ell\in\N, D\in\N^*$  satisfy \eqref{def-ell-D}.
Define 
\beq
\Omega(k,g) := \tilde{g}^{(9b^2(k+1)^2)}(0), \quad \text{~where~} \tilde{g}(n) := n+g(n). \label{def-Omega-main}
\eeq
The following hold:
\be
\item\label{main-theorem-quant-1-C1q} If $(C1_q)$ holds, then $(x_n)$ is a Cauchy sequence with rate of metastability  
$\Phi$ defined by 
\beq\label{def-Phi-C1q}
\Phi(k,g)=\max\left\{\Theta(3k+2),\ \Omega\left(3k+2,h_k\right)\right\},
\eeq
where $\Theta$ is given by \eqref{def-Theta-main} and   
\[
h_k(n) := \max\{\Theta(3k+2), n\}-n+g(\max\{\Theta(3k+2), n\}).
\]
\item\label{main-theorem-quant-1-C2q} If $(C2_q)$ holds and $\lsn_0:\N\to\N^*$ satisfies \eqref{hypothesis-lsn0},
then 
$(x_n)$ is Cauchy with rate of metastability  
$\widetilde{\Phi}$ defined by
\beq\label{def-Phi-C2q}
\widetilde{\Phi}(k,g)=\max\left\{\widetilde{\Theta}(3k+2),\ \Omega\left(3k+2,\widetilde{h}_k\right)\right\},
\eeq
where $\widetilde{\Theta}$ is given by \eqref{def-wTheta-main} and  
$$
\widetilde{h}_k(n) := \max\left\{\widetilde{\Theta}(3k+2), n\right\}-n+g\left(\max\left\{\widetilde{\Theta}(3k+2), n\right\}\right). 
$$
\ee
\end{theorem}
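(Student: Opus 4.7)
The plan is to assemble the theorem from three ingredients that have already been developed in the paper: the metastability of the auxiliary sequence $(z_n)$, a rate of convergence for $\|x_n - z_n\| \to 0$, and the combination lemma that turns these two pieces of information into a rate of metastability for $(x_n)$.

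First, I would invoke Proposition~\ref{main-quant-zn}.\eqref{main-quant-zn-alphan-noninc} with the choice $d := 3b$. The hypothesis $b \geq \|u - p\|$ for some zero $p$ of $A$ ensures $3b \geq 3\|u-p\|$, and the assumption that $(\alpha_n)$ is nonincreasing validates the applicability. This yields that $(z_n)$ is Cauchy with rate of metastability exactly
\[
\Omega_{3b}(k,g) = \tilde{g}^{((3b)^2(k+1)^2)}(0) = \tilde{g}^{(9b^2(k+1)^2)}(0) = \Omega(k,g),
\]
which is the $\Omega$ defined in \eqref{def-Omega-main}.

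Second, I would use Proposition~\ref{lim-xn-zn-0-v1} to get a rate of convergence for $\|x_n - z_n\|$ toward $0$. The standing hypotheses $(C3_q)$, $(C4_q)$, $(C5_q)$ together with the quantities $\ell$ and $D$ satisfying \eqref{def-ell-D} are precisely the hypotheses of that proposition. In case~\eqref{main-theorem-quant-1-C1q}, where $(C1_q)$ holds, Proposition~\ref{lim-xn-zn-0-v1}.\eqref{lim-xn-zn-0-v1-C1q} gives rate of convergence $\Theta$ as in \eqref{def-Theta-main}. In case~\eqref{main-theorem-quant-1-C2q}, where $(C2_q)$ and the auxiliary function $\lsn_0$ satisfying \eqref{hypothesis-lsn0} are available, Proposition~\ref{lim-xn-zn-0-v1}.\eqref{lim-xn-zn-0-v1-C2q} gives the rate $\widetilde{\Theta}$ from \eqref{def-wTheta-main}.

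Finally, I would apply Proposition~\ref{lemma-meta+conv-meta} with $u_n := z_n$ and $v_n := x_n$. In case~\eqref{main-theorem-quant-1-C1q}, the inputs are the rate of metastability $\Omega$ for $(z_n)$ and the rate of convergence $\varphi := \Theta$ for $\|x_n - z_n\|$; the resulting rate of metastability $\Gamma$ produced by Proposition~\ref{lemma-meta+conv-meta} matches $\Phi$ in \eqref{def-Phi-C1q} verbatim once one substitutes $\varphi(3k+2) = \Theta(3k+2)$ in the definition of $\widehat{g}_k$ to obtain $h_k$. Case~\eqref{main-theorem-quant-1-C2q} is identical with $\widetilde{\Theta}$ replacing $\Theta$ and $\widetilde{h}_k$ replacing $h_k$, giving $\widetilde{\Phi}$ as in \eqref{def-Phi-C2q}.

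There is no real obstacle here: the proof is essentially a bookkeeping exercise matching the notation of the cited results to the notation in the statement. The only thing to be careful about is the arithmetic that $d^2 = (3b)^2 = 9b^2$ in the exponent of $\Omega$, and verifying that the $\widehat{g}_k$ produced abstractly by Proposition~\ref{lemma-meta+conv-meta} coincides with the $h_k$ (resp.\ $\widetilde{h}_k$) appearing in the statement.
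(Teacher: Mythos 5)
Your proposal is correct and matches the paper's proof exactly: invoke Proposition~\ref{main-quant-zn}.\eqref{main-quant-zn-alphan-noninc} with $d:=3b$ for the metastability of $(z_n)$, Proposition~\ref{lim-xn-zn-0-v1} for the rate of convergence of $\|x_n-z_n\|$, and combine them via Proposition~\ref{lemma-meta+conv-meta}. The bookkeeping with $d^2 = 9b^2$ and the identification of $\widehat{g}_k$ with $h_k$ (resp.\ $\widetilde{h}_k$) is exactly what the paper does.
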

\begin{proof}
Since $(\alpha_n)$ is nonincreasing, we can apply Proposition~\ref{main-quant-zn}.\eqref{main-quant-zn-alphan-noninc} 
with $d:=3b$ to get that $(z_n)$ is Cauchy with rate of metastability $\Omega$.
Then, \eqref{main-theorem-quant-1-C1q}  is obtained by an  
application of Proposition~\ref{lemma-meta+conv-meta} and Proposition~\ref{lim-xn-zn-0-v1}.\eqref{lim-xn-zn-0-v1-C1q},
while \eqref{main-theorem-quant-1-C2q} follows from 
Proposition~\ref{lemma-meta+conv-meta} and Proposition~\ref{lim-xn-zn-0-v1}.\eqref{lim-xn-zn-0-v1-C2q}.
\end{proof}

The second quantitative version of Theorem~\ref{main-theorem} is obtained by considering $(C6_q)$ instead of $(C5_q)$.

\begin{theorem}\label{main-theorem-quant-2}
Let $H$ be a Hilbert space, $A:H\to 2^H$ be a maximally monotone operator such that $zer(A)\ne\emptyset$, $(x_n)$ be defined 
by \eqref{HPPA-def-1} and  $b\in\N^*$  be such that $b\geq \max\{\|x_0-p\|, \ \|u-p\|\}$ for some zero $p$ of $A$.
Suppose that $(C3_q)$, $(C4_q)$, $(C5_q)$ hold, $(\alpha_n)$ is nonincreasing and that $\ell\in\N, D^*\in\N^*$  satisfy 
\eqref{def-ell-Dstar}. Let $\Omega$ be given by \eqref{def-Omega-main}.

The following hold:
\be
\item\label{main-theorem-quant-2-C1q} If $(C1_q)$ holds, then $(x_n)$ is a Cauchy sequence with rate of metastability  
$\Phi^*$ defined by 
\beq\label{def-Phi-star-C1q}
\Phi^*(k,g)=\max\left\{\Theta^*(3k+2),\ \Omega\left(3k+2,h^*_k\right)\right\},
\eeq
where $\Theta^*$ is given by \eqref{def-Theta-star-main} and
$$
h^*_k(n) := \max\{\Theta^*(3k+2), n\}-n+g(\max\{\Theta^*(3k+2), n\}).
$$
\item\label{main-theorem-quant-2-C2q} If $(C2_q)$ holds and $\lsn^*_0$ satisfies \eqref{hypothesis-lsnstar0}, then 
$(x_n)$ is Cauchy with rate of metastability  $\widetilde{\Phi}$  defined by 
\beq\label{def-Phi-star-C2q}
\widetilde{\Phi^*}(k,g)=\max\left\{\widetilde{\Theta^*}(3k+2),\ \Omega\left(3k+2,\widetilde{h^*_k}\right)\right\},
\eeq
where $\widetilde{\Theta^*}$ is given by \eqref {def-wTheta-star-main} and 
$$
\widetilde{h^*_k}(n) := \max\left\{\widetilde{\Theta^*}(3k+2), n\right\}-n+
g\left(\max\left\{\widetilde{\Theta^*}(3k+2), n\right\}\right).
$$
\ee
\end{theorem}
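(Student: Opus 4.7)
The plan is to follow the same blueprint that yielded Theorem~\ref{main-theorem-quant-1}, swapping only the rate-of-convergence ingredient for $(\|x_n-z_n\|)$: instead of Proposition~\ref{lim-xn-zn-0-v1}, which consumed $(C5_q)$, we now invoke Proposition~\ref{lim-xn-zn-0-v2}, whose hypotheses $(C3_q)$, $(C4_q)$, $(C6_q)$ are exactly what Theorem~\ref{main-theorem-quant-2} assumes. The other two ingredients, namely the metastability of the approximate fixed point sequence $(z_n)$ and the ``gluing'' lemma that transfers metastability across a vanishing perturbation, are insensitive to which error condition is in force, so they can be reused verbatim.

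In more detail, first I would apply Proposition~\ref{main-quant-zn}.\eqref{main-quant-zn-alphan-noninc} with $d:=3b$; this is legal because $b\geq \|u-p\|$ by \eqref{b-hyp-main} guarantees $3b\geq 3\|u-p\|$, and the monotonicity assumption on $(\alpha_n)$ is in the hypothesis. The output is that $(z_n)$ is Cauchy with rate of metastability exactly $\Omega(k,g)=\tilde g^{(9b^2(k+1)^2)}(0)$ as in \eqref{def-Omega-main}. Next, under $(C1_q)$, Proposition~\ref{lim-xn-zn-0-v2}.\eqref{lim-xn-zn-0-v2-C1q} yields that $\limn\|x_n-z_n\|=0$ with rate $\Theta^*$; under $(C2_q)$ together with the auxiliary $\lsn^*_0$ of \eqref{hypothesis-lsnstar0}, the variant \eqref{lim-xn-zn-0-v2-C2q} yields rate $\widetilde{\Theta^*}$.

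Finally, I would feed the pair $\bigl((z_n),(x_n)\bigr)$ into Proposition~\ref{lemma-meta+conv-meta} in the roles of $(u_n)$ and $(v_n)$, with $\Omega$ as above and $\vp:=\Theta^*$ (respectively $\vp:=\widetilde{\Theta^*}$). The resulting rate of metastability for $(x_n)$ is precisely
\[
\Gamma(k,g)=\max\bigl\{\vp(3k+2),\ \Omega(3k+2,\widehat g_k)\bigr\},
\]
with $\widehat g_k(n):=\max\{\vp(3k+2),n\}-n+g(\max\{\vp(3k+2),n\})$, which, upon unfolding $\vp$, matches $\Phi^*$ in case \eqref{main-theorem-quant-2-C1q} and $\widetilde{\Phi^*}$ in case \eqml{main-theorem-quant-2-C2q}. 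This is almost purely a transcription argument.

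The only mild obstacle is bookkeeping: one must check that the definitions of $h_k^*$ and $\widetilde{h_k^*}$ in \eqref{def-Phi-star-C1q}, \eqref{def-Phi-star-C2q} coincide with the $\widehat g_k$ produced by Proposition~\ref{lemma-meta+conv-meta}, and that the outer ``$3k+2$'' matches in both coordinates. There is no new analytic content beyond what has been established in Propositions~\ref{main-quant-zn}, \ref{lim-xn-zn-0-v2} and \ref{lemma-meta+conv-meta}; this is why I would present the proof as a one-line concatenation of these three results, exactly as done in the proof of Theorem~\ref{main-theorem-quant-1}.
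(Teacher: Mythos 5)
Your proposal matches the paper's proof exactly: it spells out in full what the authors state in one line, namely to proceed as for Theorem~\ref{main-theorem-quant-1} but invoke Proposition~\ref{lim-xn-zn-0-v2} in place of Proposition~\ref{lim-xn-zn-0-v1}, then glue via Proposition~\ref{main-quant-zn}.\eqref{main-quant-zn-alphan-noninc} and Proposition~\ref{lemma-meta+conv-meta}. You also correctly read past a typo in the theorem statement, where $(C5_q)$ should be $(C6_q)$, as is forced by the appearance of $D^*$ from \eqref{def-ell-Dstar} and the use of Proposition~\ref{lim-xn-zn-0-v2}.
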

\begin{proof}
Proceed as in the proof of Theorem~\ref{main-theorem-quant-1}. 
The only difference is that we use  Proposition~\ref{lim-xn-zn-0-v2} instead of Proposition~\ref{lim-xn-zn-0-v1}.
\end{proof}

As one can see from the proofs of the previous results, the hypothesis that $(\alpha_n)$ is nonincreasing, 
appearing in both our main theorems, is used only for computing a rate of metastability for $(z_n)$. 
However, the statement of Proposition~\ref{main-quant-zn} shows that such a rate of metastability 
can be also computed if we use $(C0^*_{q})$. As a consequence, we can obtain slightly 
changed versions of Theorems~\ref{main-theorem-quant-1}, \ref{main-theorem-quant-2} using 
 $(C0^*_{q})$ as a hypothesis instead of $(\alpha_n)$ being nonincreasing.

Let us give some consequences of Theorem~\ref{main-theorem-quant-1}. One can obtain in a similar way 
corollaries of Theorem~\ref{main-theorem-quant-2}.

By letting $e_n=0$, we get that 
\beq
x_n=\alpha_n u+(1-\alpha_n)J_{\beta_n A}x_n. \label{def-HPPA-en0}
\eeq
Furthermore, both $(C5_q)$ and $(C6_q)$ hold trivially with $\ratecfiveq=\ratecsixq=0$.

\bcor\label{meta-theorem-2-c1}
Let $H, A, b$ be as in Theorem~\ref{main-theorem-quant-1} with $(x_n)$ defined by \eqref{def-HPPA-en0}.
Suppose that $(C3_q)$, $(C4_q)$ hold, $(\alpha_n)$ is nonincreasing and that $\ell\in \N^*$ is such 
that $\beta\geq \frac{1}{\ell+1}$.

The following hold:
\be
\item If $(C1_q)$ holds, then $(x_n)$ is a Cauchy sequence with rate of metastability  
$\Phi$ defined by \eqref{def-Phi-C1q}, with $\Omega$, $h_k$, $\psi$ as in Theorem~\ref{main-theorem-quant-1} and 
\bea
\Theta(k) &:=& \rateconeq(\lsn(k)+\lceil \ln(3(1+5b)(k+1))\rceil)+1, \label{def-Theta-cor1}\\
\lsn(k) &:=& \max\{\psi(3k+2), 1\}.
\eea
\item If $(C2_q)$ holds and $\lsn_0:\N\to\N^*$ satisfies \eqref{hypothesis-lsn0},
then  $(x_n)$ is Cauchy with rate of metastability  
$\widetilde{\Phi}$ defined by \eqref{def-Phi-C2q}, with $\Omega$, $\widetilde{h}_k$, $\psi$ as 
in Theorem~\ref{main-theorem-quant-1}, 
$\lsn$ as in (i) and 
\beq \label{def-wTheta-cor1}
\widetilde{\Theta}(k) := \max\left\{\ratectwoq\left(3(1+5b)\lsn_0(k)(k+1)-1\right),\lsn(k)\right\}+1.
\eeq
\ee
\ecor
\begin{proof}
Apply Theorem~\ref{main-theorem-quant-1} with $D=1$ and $\ratecfiveq=0$.
\end{proof}

By letting $g(n)=L$, we get uniform rates of $L$-metastability for every $L\in\N$.

\bcor\label{meta-theorem-2-c2}
Assume the hypothesis of Corollary~\ref{meta-theorem-2-c1} and let $L\in\N$. Define 
\bua
\Delta_L:\N\to \N, \quad \Delta_L(k):=\Theta(3k+2)+R(k)L\\
\widetilde{\Delta}_L:\N\to \N, \quad \widetilde{\Delta}_L(k):=\widetilde{\Theta}(3k+2)+R(k)L,
\eua
 where $\Theta$ is given by \eqref{def-Theta-cor1}, $\widetilde{\Theta}$ is given by \eqref{def-wTheta-cor1} and 
 $R:\N\to \N$ is defined by $R(k):=81b^2(k+1)^2$. 
 
The following hold:
\be
\item\label{meta-theorem-2-c2-1} If $(C1_q)$ holds, then $\Delta_L$ is a rate of $L$-metastability of $(x_n)$.
\item\label{meta-theorem-2-c2-2}  If $(C2_q)$ holds, then $\widetilde{\Delta}_L$ is a rate of $L$-metastability of $(x_n)$.
\ee
\ecor
\begin{proof}
We apply Corollary~\ref{meta-theorem-2-c1} with $g(n):=L$ for all $n\in\N$. 
\be
\item Remark, using the notations from Theorem~\ref{main-theorem-quant-1}.\eqref{main-theorem-quant-1-C1q}, that
\bua 
h_k(n) & =& \max\{\Theta(3k+2),n\}-n + L , \\
 \tilde{h_k}(n) & =& \max\{\Theta(3k+2),n\} + L.
\eua 
One can easily see by induction that $\tilde{h_k}^{(n)}(0)=\Theta(3k+2)+nL$ for all $n\geq 1$. It follows that 
\bua 
\Omega\left(3k+2,h_k \right)& = &  \tilde{h_k}^{(R(k))}(0) = \Theta(3k+2)+R(k)L, \\
\Phi(k,g) & = & \max\left\{\Theta(3k+2),\ \Omega\left(3k+2,h_k\right)\right\} =  \Theta(3k+2)+R(k)L \\
& = &  \Delta_L(k).
\eua
\item  The proof is similar. 
\ee
\end{proof}

Recently, Kohlenbach \cite{Koh20a} computed rates of metastability for the particular version of 
the HPPA $(x_n)$ obtained by letting $e_n:=0$ for all $n\in\N$ (as in \eqref{def-HPPA-en0}), 
in the more general setting of accretive operators in uniformly convex and uniformly smooth Banach spaces.  
The strong convergence proof analyzed in \cite{Koh20a}, due to Aoyama and Toyoda \cite{AoyToy17}, 
uses conditions $(C0)$, $(C1)$ on $(\alpha_n)$ and the hypothesis $\inf_n\beta_n>0$ on $(\beta_n)$ and is different from the one we analyze in this paper.
In order to compute the rate of metastability for $(x_n)$, Kohlenbach uses the quantitative form $(C0_q)$,  requiring a rate of convergence of $(\alpha_n)$. We compute rates of metastability for $(x_n)$ by using, 
instead of $(C0_q)$, either the hypothesis that $(\alpha_n)$ is nonincreasing  or 
$(C0_q^*)$, a weaker form of $(C0_q)$ which needs only a quasi-rate of convergence of $(\alpha_n)$. 
Furthermore, the rate of metastability computed in \cite{Koh20a}  depends on an extra-sequence 
$(\widetilde{\alpha}_n)$ satisfying $0<\widetilde{\alpha}_n\leq \alpha_n$ for all $n\in\N$. 
As  a consequence, as Kohlenbach also points out, the bounds obtained by him for the sequence 
$(x_n)$, given by  \eqref{def-HPPA-en0}, are more complicated than  the ones we obtain in this paper.

\section{An example}\label{hppa-example}

We finish with an example of parameters $(\alpha_n), (\beta_n), (e_n)$ satisfying the hypotheses 
of Theorems~\ref{main-theorem-quant-1} and \ref{main-theorem-quant-2}.

Let 
\[
\alpha_n:=(n+2)^{-3/4}, \quad \beta_n:=1+\frac{(-1)^n}{n+1}, \quad e_n:=0  \quad \text{~for all~} n\in\N.
\]
One can verify that 
\be
\item $(\alpha_n)$ is nonincreasing.
\item $(C0_q)$ holds with $\rateczeroq(k)=(k+1)^2$. 
\item $(C1_q)$ holds with $\rateconeq(k)=(k+1)^4$.
\item $(C2_q)$ holds with $\ratectwoq(k)=k$.
\item $(C3_q)$ holds with $\ratecthreeq(k)= (k+1)^4+1$.
\item $(C4_q)$ holds with $\beta=1$ and $\ratecfourq(k)=k$.
\item $(C5_q)$ and $(C6_q)$ hold with $\ratecfiveq(k)=\ratecsixq(k)=0$.
\ee
Furthermore, $\ell=0$ and $D=1$ satisfy \eqref{def-ell-D} and, with the notations of 
Proposition~\ref{lim-xn-zn-0-v1}.\eqref{lim-xn-zn-0-v1-C1q},
\bua
\psi(k) &=& (6b(k+1))^4+1=6^4b^4(k+1)^4+1, \\
\lsn(k) &=& 18^4b^4(k+1)^4+1,\\
\Theta(k) &=& \left(18^4b^4(k+1)^4+1+\lceil \ln(3(1+5b)(k+1))\rceil\right)^4+1.
\eua

\bprop\label{example-xn-zn}
$\limn \|x_n-z_n\|=0$ with rate of convergence 
$$
\Theta_0(k) = \left(18^4b^4(k+1)^4+18b(k+1)+1\right)^4+1.
$$
\eprop
\begin{proof} By Proposition~\ref{lim-xn-zn-0-v1}.\eqref{lim-xn-zn-0-v1-C1q} and the fact that $\Theta(k)\leq \Theta_0(k)$. 
\end{proof}

Applying Theorem~\ref{main-asympt-reg} with $\Lambda=\Theta_0$ and $M_i=1$ for every $i\in\N$, 
we get effective rates of asymptotic regularity.

\bprop\label{example-rate-as-reg}
Let $C:=72b$ and define $\exasreg, \, \exasreg^*:\N\to\N$ by 
\bua
\exasreg(k) &:=&  \left(C^4(k+1)^4+C(k+1)+1\right)^4+1, \\
\exasreg^*(k) &:=& \exasreg(2k+1) =\left(16C^4(k+1)^4 + 2C(k+1)+1\right)^4+1.
\eua
Then 
\be
\item $\exasreg$  is a rate of asymptotic regularity of $(x_n)$ with respect to $J_{\beta A}$.
\item $\exasreg^*$  is a rate of asymptotic regularity of $(x_n)$ with respect to $(J_{\beta_n A})$ and 
$J_{\beta_i A}$ ($i\in\N$).
\ee
\eprop

Finally, an application of Corollary~\ref{meta-theorem-2-c2}.\eqref{meta-theorem-2-c2-1}
gives us the following result.

\bprop\label{example-stable-constant-g}
Let $L\in\N$ and define $\exLmeta:\N\to\N$ by 
\[ \exLmeta(k):=\left(54^4b^4(k+1)^4+54b(k+1)+1\right)^4 + 81b^2(k+1)^2L+1.\]
Then $\exLmeta$ is a rate of $L$-metastability for $(x_n)$, that is: for all $k\in\N$, there exists 
$N\leq \exLmeta(k)$ such that 
\[\|x_i-x_j\|\leq \frac{1}{k+1} \qquad \text{for all~} i,j\in [N,N+L].\]
\eprop
\begin{proof}
Let $k\in\N$ be arbitrary. By Corollary~\ref{meta-theorem-2-c2}.\eqref{meta-theorem-2-c2-1}, we get that there exists 
$N\leq \Theta(3k+2)+R(k)L$ such that 
\[\|x_i-x_j\|\leq \frac{1}{k+1} \qquad \text{for all~} i,j\in [N,N+L].\]
Remark that $\Theta(3k+2)+R(k)L\leq \Theta_0(3k+2)+R(k)L= \exLmeta(k)$.
\end{proof}

Thus, we obtain for this example, as a consequence of the quantitative results from Section~\ref{section-main},
polynomial rates of convergence for $(\|x_n-z_n\|)$, polynomial rates of asymptotic regularity 
for $(x_n)$ and polynomial rates of $L$-metastability of $(x_n)$ for every $L\in\N$. 
Furthermore, the rates are highly uniform: they depend on the Hilbert space $H$, the maximally 
monotone operator $A$ and the sequence $(x_n)$ only via $b$, an upper bound on $\|x_0-p\|, \|u-p\|$ 
for some zero $p$ of $A$.

\section*{Acknowledgements}

Lauren\c{t}iu Leu\c{s}tean was partially supported by a grant of the Romanian Ministry of Research and Innovation, Program 1 - Development of the National RDI System, Subprogram 1.2 - Institutional Performance -  Projects for Funding the Excellence in RDI, contract number 15PFE/2018. Pedro Pinto acknowledges and is thankful for the financial support of: FCT - Funda\c{c}\~{a}o para a Ci\^{e}ncia e Tecnologia under the project UID/MAT/04561/2019; the research center Centro de Matem\'{a}tica, Aplica\c{c}\~{o}es Fundamentais com Investiga\c{c}\~{a}o Operacional, Universidade de Lisboa; and the `Future Talents' short-term scholarship at Technische Universit\"at Darmstadt.
The paper also benefited from discussions with Fernando Ferreira and Ulrich Kohlenbach.


\begin{thebibliography}{}

\bibitem{AoyToy17} 
Aoyama, K., Toyoda, M.: Approximation of zeros of accretive operators in a Banach space. Israel Journal of Mathematics 220, 803-816 (2017)

\bibitem{BauCom17}
Bauschke, H.H., Combettes, P.L.: Convex Analysis and Monotone Operator Theory in Hilbert Spaces (2nd edition). Springer (2017)

\bibitem{BoiMor10}
Boikanyo, O.A., Moro\c sanu, G.: A proximal point algorithm converging strongly for general errors.
Optimization Letters 4, 635-641  (2010)

\bibitem{BoiMor11}
Boikanyo, O.A., Moro\c sanu, G.: Inexact Halpern-type proximal point algorithm.
Journal of Global Optimization 51, 11-26 (2011)

\bibitem{BorReiSha92}
Borwein, J., Reich, S., Shafrir, I.: Krasnoselski-Mann iterations in normed spaces. Canadian  Mathematical 
Bulletin 35,  21-28  (1992)

\bibitem{BotCseMei19}
Bo\c t, R.I., Csetnek, E.R., Meier, D.: Inducing strong convergence into the asymptotic behaviour 
of proximal splitting algorithms in Hilbert spaces. Optimization Methods and Software 34, 
489-514 (2019)

\bibitem{Bro67}
Browder, F.E.: Convergence of approximants to fixed points of nonexpansive nonlinear mappings 
in Banach spaces.  Archive for Rational Mechanics and Analysis 24, 82-90 (1967)

\bibitem{BroPet66} 
Browder, F.E., Petryshyn, W.V.: The solution by iteration of nonlinear functional equations in Banach spaces. 
Bulletin of the American Mathematical Society 72, 571-575 (1966)

\bibitem{EckBer92}
Eckstein, J., Bertsekas, D.P.: On the Douglas-Rachford splitting method and the proximal 
point algorithm for maximal monotone operators. Mathematical Programming 55, 293-318 (1992)

\bibitem{FerLeuPin19}
Ferreira, F., Leu\c stean, L., Pinto, P.: On the removal of weak compactness arguments in proof 
mining.  Advances in Mathematics 354, 106728 (2019)

\bibitem{Gul91}
G{\"u}ler, O.: On the convergence of the proximal point algorithm for convex minimization. 
SIAM Journal on Control and Optimization 29, 403-419 (1991)

\bibitem{Hal67} 
Halpern, B.: Fixed points of nonexpanding maps. Bulletin of the American Mathematical Society 73, 957-961
 (1967)
 
\bibitem{KamTak00}
Kamimura, S., Takahashi, W.: Approximating solutions of maximal monotone operators in Hilbert 
spaces. Journal of Approximation Theory 106, 226-240 (2000)

\bibitem{Koh05} 
Kohlenbach, U.: Some logical metatheorems with applications in functional analysis. Transactions 
of the American Mathematical Society 357, 89-128 (2005)
  
\bibitem{Koh08} 
Kohlenbach, U.:  Applied Proof Theory: Proof Interpretations and their Use in Mathematics.
Springer (2008)

\bibitem{Koh19} 
Kohlenbach, U.:  Proof-theoretic Methods in Nonlinear Analysis. In: Sirakov, B., Ney de Souza, P.,
Viana, M.  (eds.), Proceedings of ICM 2018, Vol. 2, pp. 61-82. World Scientific (2019)

\bibitem{Koh11} 
Kohlenbach, U.: On quantitative versions of theorems due to F.E. Browder and R. Wittmann.
Advances in Mathematics 226, 2764-2795 (2011) 

\bibitem{Koh20} 
Kohlenbach, U.: Local formalizations in nonlinear analysis and related areas and proof-theoretic 
tameness.  In: Weingartner, P.,  Leeb, H.-P. (eds.), Kreisel's Interests. On the Foundations of Logic and Mathematics, pp. 45-61.
College Publications  (2020)

\bibitem{Koh20a}
Kohlenbach, U.: Quantitative analysis of a Halpern-type Proximal Point Algorithm for accretive 
operators in Banach spaces. Journal of Nonlinear and Convex Analysis 21, 2125–2138 (2020)

\bibitem{Koh21}
Kohlenbach, U.: Quantitative results on the Proximal Point Algorithm in uniformly convex Banach spaces.
 Journal of Convex Analysis 28, 11-18 (2021)

\bibitem{KohLeu12a} 
Kohlenbach, U., Leu\c{s}tean, L.: Effective metastability of Halpern 
iterates in CAT(0) spaces.  Advances in Mathematics 231, 2526-2556 (2012). Addendum in 
Advances in Mathematics 250, 650-651 (2014)

\bibitem{KohLeuNic18}
Kohlenbach, U., Leu\c{s}tean, L., Nicolae, A.: 
Quantitative results on Fej\'er monotone sequences. 
Communications in Contemporary Mathematics 20, 1750015 (2018)

\bibitem{KohSip21} 
Kohlenbach, U., Sipo\c s, A.: The finitary content of sunny nonexpansive retractions. 
Communications in Contemporary Mathematics 23, 1950093 (2021)

\bibitem{Leu07a}
Leu\c{s}tean, L.: Rates of asymptotic regularity for Halpern iterations of nonexpansive mappings.
Journal of Universal Computer Science 13, 1680-1691  (2007)

\bibitem{LeuNic16}
Leu\c{s}tean, L., Nicolae, A.: Effective results on nonlinear ergodic averages in
$CAT(\kappa$) spaces.
Ergodic Theory and Dynamical Systems 36, 2580--2601 (2016)

\bibitem{LeuNicSip18}
Leu\c{s}tean, L., Nicolae, A., Sipo\c s, A.: An abstract proximal point algorithm.
Journal of Global Optimization 72, 553--577 (2018)

\bibitem{LeuSip18}
Leu\c{s}tean, L.,  Sipo\c s, A.: 
An application of proof mining to the proximal point algorithm in CAT(0) spaces. 
In: Bellow, A., Calude, C., Zamfirescu, T.  (eds.) Mathematics 
Almost Everywhere. In Memory of Solomon Marcus, pp. 153-168. World Scientific (2018)

\bibitem{LeuSip18b}
Leu\c{s}tean, L.,  Sipo\c s, A.: 
Effective strong convergence of the proximal point algorithm in CAT(0) spaces.
Journal of Nonlinear and Variational Analysis 2, 219-228 (2018)

\bibitem{Mar70}
Martinet, B.: R\'egularisation d'in\'equations variationnelles par approximations successives.
Revue Fran\c caise d'Informatique et de  Recherche Op\'erationnelle 4, 154-158  (1970)

\bibitem{Neu15}
Neumann, E.: Computational problems in metric fixed point theory and their Weihrauch degrees. 
Logical Methods in Computer Science 11, 1-44 (2015)

\bibitem{Pin21}
Pinto, P.: A rate of metastability for the Halpern type Proximal Point Algorithm.
arXiv:1912.12468 [math.FA] (2019) 

\bibitem{Roc76}
Rockafellar, R.T.: Monotone operators and the proximal point algorithm.
SIAM Journal on Control and Optimization 14, 877-898 (1976)

\bibitem{SolSva00}
Solodov, M.V., Svaiter, B.F.: Forcing strong convergence of proximal point iterations in 
a Hilbert space. Mathematical Programming, Ser. A 87, 189-202  (2000)

\bibitem{Tao07} 
Tao, T.: Soft analysis, hard analysis, and the finite convergence principle (essay posted 
May 23, 2007). In: Tao,T. Structure and Randomness: Pages from Year One of a 
Mathematical Blog, pp. 77-87. American Mathematical Society (2008)

\bibitem{Tao08} 
Tao, T.: Norm convergence of multiple ergodic averages for commuting transformations.
Ergodic Theory and Dynamical Systems 28, 657-688 (2008)

\bibitem{Wal12} 
M. Walsh, Norm convergence of nilpotent ergodic averages. Annals of Mathematics 175, 1667-1688 (2012)

\bibitem{WanWanXu16}
Wang, Y., Wang, F., Xu, H.-K.: Error sensitivity for strongly convergent modifications of the 
proximal point algorithm.  Journal of Optimization Theory and Applications 168, 901-916 (2016)

\bibitem{Wit92}
Wittmann, R.: Approximation of fixed points of nonexpansive mappings. Archiv der Mathematik 
58, 486-491 (1992)

\bibitem{Xu02}
Xu, H.-K.: Iterative algorithms for nonlinear operators. Journal of the London Mathematical Society 
 66, 240-256 (2002)


\end{thebibliography}
\end{document}